\documentclass[11pt]{article}
\usepackage[margin=1in]{geometry}
\usepackage[utf8]{inputenc}
\usepackage[T1]{fontenc}
\usepackage[english]{babel}
\usepackage{lmodern}
\usepackage{graphicx}
\usepackage{wrapfig}
\usepackage{subfig}
\usepackage{caption}
\usepackage{paralist}
\usepackage{enumitem}

\usepackage[linesnumbered,ruled,vlined]{algorithm2e}

\graphicspath{{./figures/}}

\makeatletter
\renewcommand{\p@enumii}{}
\makeatother

\newcommand{\titel}{
	Hamiltonian cycles in 4-connected planar and projective planar triangulations with few 4-separators
	}

\usepackage[usenames]{color}
\definecolor{hellblau}{rgb}{0.2,0.4,1}
\definecolor{dunkelblau}{rgb}{0,0,0.8}
\definecolor{dunkelgruen}{rgb}{0,0.5,0}
\usepackage[
	pdftex,
	colorlinks,
	linkcolor=dunkelblau,
	urlcolor=dunkelblau,
	citecolor=dunkelgruen,
	bookmarks=true,
	linktocpage=true,
	pdfsubject={}
]{hyperref}
\urlstyle{same}

\usepackage{pdfpages}
\usepackage{amsmath}
\usepackage{amsthm}
\usepackage{amsfonts}
\theoremstyle{plain}
	\newtheorem{satz}{Satz}[]
	\newtheorem{theorem}[satz]{Theorem}
	\newtheorem{lemma}[satz]{Lemma}

\theoremstyle{remark}
	
\theoremstyle{definition}

	\newtheorem{conjecture}[satz]{Conjecture}

\begin{document}
	\title{\titel}
	\author{On-Hei Solomon Lo\thanks{School of Mathematical Sciences, Xiamen University, Xiamen 361005, PR China. This work was partially supported by NSFC grant 11971406.}\\
	\and Jianguo Qian\addtocounter{footnote}{-1}\footnotemark}
	\date{}
	\maketitle

\begin{abstract}
	Whitney proved in 1931 that every 4-connected planar triangulation is hamiltonian. Later in 1979, {Hakimi, Schmeichel and Thomassen conjectured that every such triangulation on $n$ vertices has at least $2(n - 2)(n - 4)$ hamiltonian cycles. Along this direction, Brinkmann, Souffriau and Van Cleemput established} a linear lower bound on the number of hamiltonian cycles in 4-connected planar triangulations. {In stark contrast, Alahmadi, Aldred and Thomassen showed that every 5-connected triangulation of the plane or the projective plane has exponentially many hamiltonian cycles. This gives the motivation to study the number of hamiltonian cycles of 4-connected triangulations with few 4-separators. Recently,} Liu and Yu showed that every 4-connected planar triangulation with $O(n / \log n)$ 4-separators has a quadratic number of hamiltonian cycles. {By adapting the framework of Alahmadi et al.\ we strengthen the last two aforementioned results. We prove that every 4-connected planar or projective planar triangulation with $O(n)$ 4-separators has exponentially many hamiltonian cycles.}
\end{abstract}

\section{Introduction}

A classical theorem of Whitney in 1931 proved that every 4-connected planar triangulation has a hamiltonian cycle~\cite{Whitney1931}. In 1956, this result was extended by Tutte~\cite{Tutte1956} to 4-connected planar graphs. One may subsequently ask how many hamiltonian cycles a 4-connected planar triangulation or planar graph may have. In 1979, Hakimi, Schmeichel and Thomassen~\cite{Hakimi1979a} proposed the following conjecture:

\begin{conjecture}[\cite{Hakimi1979a}] \label{conj:HST}
	Every $4$-connected planar triangulation $G$ on $n$ vertices has at least $2(n - 2)(n - 4)$ hamiltonian cycles, with equality if and only if {$G$ is the double-wheel graph on $n$ vertices, that is, the join of a cycle of length $n - 2$ and an empty graph on two vertices.}
\end{conjecture}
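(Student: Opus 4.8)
The plan is to prove the stated lower bound by induction on $n$, cutting along a $4$-separator and carrying along, besides the hamiltonian-cycle count, the number of certain rooted spanning path systems. The first reduction disposes of the triangulations with no $4$-separator: these are precisely the $5$-connected planar triangulations, which by the theorem of Alahmadi, Aldred and Thomassen have exponentially — hence, for all but finitely many $n$ (checked directly), far more than $2(n-2)(n-4)$ — hamiltonian cycles; in fact the main theorem of this paper already handles every $4$-connected planar triangulation with $O(n)$ $4$-separators, so the substantive remaining case is that of triangulations with super-linearly many $4$-separators, where in particular a $4$-separator is always available to cut at.

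So let $S=\{a,b,c,d\}$ be a $4$-separator of $G$. In a $4$-connected planar triangulation $S$ splits $G$ into exactly two sides $G_1,G_2$, its induced edges form a $4$-cycle $abcd$ (carrying at most one chord), and, adding to side $i$ this $4$-cycle together with a chosen diagonal, one obtains a $4$-connected planar triangulation $G_i^{+}$ on $n_i<n$ vertices with $n_1+n_2=n+4$. A hamiltonian cycle of $G$ meets $S$ in one of boundedly many patterns, and accordingly induces on each $G_i^{+}$ one member of a fixed finite list of spanning systems of one or two vertex-disjoint paths with ends in $S$ and prescribed use of the diagonal; conversely, compatible pairs of such systems glue back to a hamiltonian cycle of $G$. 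This is the same bookkeeping that appears, in more restricted settings, in the arguments of Hakimi--Schmeichel--Thomassen and of Liu--Yu.

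The induction therefore proves, for a $4$-connected planar triangulation $H$ with a marked near-facial $4$-cycle, both the target bound $h(H)\ge 2(|V(H)|-2)(|V(H)|-4)$ on hamiltonian cycles and a linear lower bound $p(H)=\Omega(|V(H)|)$ on the number of the relevant rooted path systems — the $5$-connected base of the latter coming from Tutte--Thomassen-type results on disjoint paths with prescribed ends. Summing the gluing correspondence over all patterns at $S$ produces an inequality $h(G)\ge \alpha\,h(G_1^{+})+\beta\,p(G_1^{+})\,p(G_2^{+})+\cdots$ with positive constants; substituting the inductive estimates and the identity $n_1+n_2=n+4$, and optimizing over the split, should give $h(G)\ge 2(n-2)(n-4)$. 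The equality clause is then read off by running the induction in reverse: equality propagates through every step, forcing each side of each cut to be a wheel and each gluing to be degenerate, so the whole decomposition collapses to the join of a cycle with two vertices, while the $5$-connected base lies strictly above the bound and contributes no further extremal graph.

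The step I expect to be the real obstacle is controlling the path-system counts $p(\cdot)$ sharply enough — both to keep the recursion from dropping below the quadratic target when one side of the cut is small, and to pin down the double wheel as the \emph{unique} minimizer in the equality analysis. It is exactly here that the currently available machinery, including the framework of Alahmadi et al.\ exploited in this paper, yields only a linear bound; closing the gap for triangulations with arbitrarily many $4$-separators would require a genuinely finer understanding of disjoint spanning path systems rooted at a $4$-cut than is presently known, which is why the unconditional quadratic bound — let alone the full equality characterization — of Conjecture~\ref{conj:HST} remains open.
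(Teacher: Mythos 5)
This statement is Conjecture~\ref{conj:HST}, an open conjecture of Hakimi, Schmeichel and Thomassen that the paper merely cites as motivation; the paper contains no proof of it, and your proposal does not constitute one either — you concede as much in your final paragraph. The honest verdict is therefore: genuine gap, indeed several. First, the structural reduction is not sound as stated. A $4$-separator in a $4$-connected planar triangulation does induce a separating $4$-cycle, but the graph $G_i^{+}$ obtained by adding a diagonal to one side need not be a $4$-connected triangulation: the added diagonal $ac$ can create a separating triangle $avc$ whenever a vertex $v$ of that side is adjacent to both $a$ and $c$, and it can happen that both choices of diagonal fail, so the induction hypothesis cannot simply be applied to $G_i^{+}$. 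Second, the quantitative heart of the argument — a count $p(\cdot)$ of rooted spanning path systems strong enough that the gluing inequality $h(G)\ge \alpha\,h(G_1^{+})+\beta\,p(G_1^{+})\,p(G_2^{+})+\cdots$ recovers the quadratic bound $2(n-2)(n-4)$ for every split $n_1+n_2=n+4$ — is exactly what is missing from the literature; ``should give'' is doing all the work, and with only linear control on $p(\cdot)$ (which is what Tutte/Thomassen/Sanders-type prescribed-edge results and the Brinkmann--Souffriau--Van Cleemput machinery actually deliver) the recursion does not close, particularly when one side of the cut is small.

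Third, your dismissal of the few-separator regime overstates what is available: Theorem~\ref{thm:main} applies only when the number of $4$-separators is at most $cn$ with $c<1/324$, not for arbitrary $O(n)$, and its $2^{\Omega(n)}$ bound is asymptotic with unspecified constants, so it neither covers all small $n$ nor says anything about the equality clause; the double-wheel graphs themselves have $\Theta(n)$ $4$-separators (indeed the maximum possible), so the extremal graphs live precisely in the regime where you would need, but do not have, sharp constants. Finally, the equality characterization cannot be ``read off by running the induction in reverse'' when the inequality itself has not been established with tight constants at every step. In short, the statement remains a conjecture, and your outline, while a reasonable description of how one might hope to attack it, does not bridge the known gap between linear and quadratic lower bounds for triangulations with many $4$-separators.
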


In the same paper, they also proved that every 4-connected planar triangulation on $n$ vertices has at least $n / \log_2 n$ hamiltonian cycles. This lower bound was recently improved by Brinkmann, Souffriau and Van Cleemput~\cite{Brinkmann2018} to a linear bound of {$12(n - 2)/5$}{, which was then refined to }{$161(n - 2)/60$} for $n \ge 7$ by Cuvelier~\cite{Cuvelier2015}. For 4-connected planar graphs, Sander~\cite{Sanders1997} showed that there exists a hamiltonian cycle containing any two prescribed edges in any 4-connected planar graph. This implies that every 4-connected planar graph $G$ has at least $\binom{\Delta}{2}$ hamiltonian cycles, where $\Delta$ denotes the maximum degree of $G$. However, it assures only a constant lower bound as there are infinitely many 4-connected planar graphs of {maximum degree upper} bounded by 4. Generalizing the method used in~\cite{Brinkmann2018}, Brinkmann and Van Cleemput~\cite{Brinkmann} gave a linear lower bound on the number of hamiltonian cycles in 4-connected planar graphs.

We will focus on the number of hamiltonian cycles of 4-connected triangulations of the plane or the projective plane with a bounded number of 4-separators. Interestingly, if Conjecture~\ref{conj:HST} held, we would have that a 4-connected planar triangulation has a minimum number of hamiltonian cycles if and only if it has a maximum number of 4-separators, as the double-wheel graphs are the 4-connected planar triangulations that maximize the number of 4-separators~\cite{Hakimi1979}. {Trivially, 5-connected planar triangulations have a minimum number of 4-separators among 4-connected planar triangulations as they have no 4-separators. Indeed, Alahmadi, Aldred and Thomassen~\cite{Alahmadi2020} proved that every 5-connected triangulation embedded on the plane or on the projective plane has $2^{\Omega(n)}$ hamiltonian cycles. Following the approach of Alahmadi et al.,} it was shown in~\cite{Lo2020a} that every 4-connected planar triangulation has at least $\Omega((n / \log n)^2)$ hamiltonian cycles if it has only $O(\log n)$ 4-separators. Recently, Liu and Yu~\cite{Liu2020} improved this result by showing that every 4-connected planar triangulation with $O(n / \log n)$ 4-separators has $\Omega(n^2)$ hamiltonian cycles. 
These results, in a sense, give evidence supporting that triangulations of the plane or the projective plane with fewer 4-separators may have more hamiltonian cycles.
We will prove the following result, indicating that a 4-connected planar or projective planar triangulation may have exponentially many hamiltonian cycles as long as it has at most a linear number of $4$-separators, thereby extending the results mentioned above.

\begin{theorem} \label{thm:main}
	Let $G$ be a $4$-connected planar or projective planar triangulation on $n$ vertices and let $c$ be an arbitrary constant less than $1/324$. If $G$ has at most $cn$ $4$-separators, then it has $2^{\Omega(n)}$ hamiltonian cycles.
\end{theorem}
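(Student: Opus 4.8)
The plan is to adapt the framework of Alahmadi, Aldred and Thomassen \cite{Alahmadi2020}, where exponentially many Hamiltonian cycles in a $5$-connected triangulation are obtained from a \emph{single} Hamiltonian cycle by performing a linear number of mutually independent local reroutings; here the role of $5$-connectivity is taken over by the hypothesis that $G$ has at most $cn$ $4$-separators with $c<1/324$. Cutting $G$ along its $4$-separators (each of which, by $4$-connectivity, is a chordless separating $4$-cycle) produces only $O(cn)$ internally $5$-connected pieces of total order $O(n)$, so inside those pieces, away from their bounded boundaries, $G$ is genuinely $5$-connected locally. Combining this with an Euler-formula count --- a positive fraction of the vertices have degree bounded by an absolute constant --- and then a greedy disjointification, I would extract a family $\{H_1,\dots,H_k\}$ of $k=\Omega(n)$ pairwise vertex-disjoint, bounded-size, $4$-separator-free triangulated disks. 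Keeping track of the several loss factors here --- the portion of $G$ absorbed by the pieces' boundaries, the degree threshold, and the disjointification --- is precisely what pins the admissible constant down to $1/324$.

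Next, following Alahmadi et al., I would build a Hamiltonian cycle $C$ of $G$ by an inductive Tutte-path-type argument (based on the Hamiltonicity of $4$-connected planar, respectively projective planar, triangulations of Whitney and Tutte \cite{Whitney1931,Tutte1956} and on the projective-plane version used in \cite{Alahmadi2020}), in which the regions $H_i$ are \emph{reserved} in advance and routed through in a prescribed manner: inside $H_i$ the cycle is forced to traverse some edge $u_iv_i$ as a consecutive pair $u_i,v_i$ whose two incident triangular faces $u_iv_ix_i$ and $u_iv_iy_i$ satisfy $x_iu_i,\,v_iy_i\in E(C)$ (and then automatically $x_iv_i,\,u_iy_i\notin E(C)$). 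One cannot prescribe linearly many edges of a Hamiltonian cycle from outside, so this local ``diamond'' pattern has to be built into the construction of $C$, exactly as in the $5$-connected case, the only difference being that the reserved regions are now the $4$-separator-free disks supplied by the first step.

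With this in hand, for each $i$ one replaces the subpath $x_i,u_i,v_i,y_i$ of $C$ by $x_i,v_i,u_i,y_i$ --- the edges $x_iv_i$ and $u_iy_i$ exist because $u_iv_ix_i$ and $u_iv_iy_i$ are faces --- which again produces a Hamiltonian cycle and alters only edges among $\{x_i,u_i,v_i,y_i\}$; this is the alternating-$4$-cycle swap along $x_iu_iy_iv_i$. Since the $H_i$ are vertex-disjoint, the $k$ switches commute and are mutually independent, so the $2^k$ subsets of switches give $2^k=2^{\Omega(n)}$ Hamiltonian cycles of $G$, pairwise distinct because two of them differ on the edge set of any $H_i$ at which the chosen subsets disagree. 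This establishes the theorem.

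I expect the crux to be the compatible-cycle step together with the accounting in the first step: one must isolate a notion of gadget that is simultaneously cheap (spoiled by only $O(1)$ $4$-separators, so the $cn$ budget still leaves $\Omega(n)$ of them), internally rigid enough to host the diamond pattern and its switch, and compatible with a global Tutte-path construction --- and one must carry all of this out on the projective plane, where the ``inside/outside'' dichotomy underpinning planar Tutte paths is unavailable and must be replaced by the cross-cap analysis of \cite{Alahmadi2020}. Optimizing these constants against one another is what yields the bound $1/324$.
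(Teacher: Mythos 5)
Your mechanism for producing the exponential family is not the one the paper uses, and its central step is a genuine gap. You need a single hamiltonian cycle $C$ of $G$ that traverses each of $\Omega(n)$ prescribed, pairwise disjoint gadgets $H_i$ in a prescribed ``diamond'' pattern ($u_iv_i\in E(C)$ with $x_iu_i,v_iy_i\in E(C)$), and you justify this by saying the pattern ``has to be built into the construction of $C$, exactly as in the 5-connected case.'' But this is a mischaracterization of \cite{Alahmadi2020}: neither that paper nor this one ever constructs one hamiltonian cycle forced through reserved regions and then reroutes it. No known Tutte-path technology forces a hamiltonian cycle through linearly many prescribed local configurations; in the 4-connected planar setting the strongest such statement is Sanders' theorem \cite{Sanders1997}, which prescribes only \emph{two} edges, and on the projective plane even less is available beyond the bare hamiltonicity result of Thomas and Yu \cite{Thomas1994}. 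So the existence of your cycle $C$ --- which you yourself identify as the crux --- is asserted, not proved, and nothing you cite supplies it. The preparatory step (cutting along the $4$-separators into ``internally 5-connected'' pieces and extracting $\Omega(n)$ disjoint, bounded-size, $4$-separator-free disks) is likewise only sketched, and your account of where $1/324$ would come from is speculative; the local switch itself (replacing $x_iu_iv_iy_i$ by $x_iv_iu_iy_i$) is fine, but it is the last and easiest link in the chain.

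The actual argument runs differently and never touches a hamiltonian cycle until the very end. One builds an independent set $S$ of $\Omega(n)$ vertices of degree at most $6$ such that no vertex of $S$ lies on, or has three neighbours on, any separating $4$-cycle, and such that $S$ saturates no $4$-cycle, $5$-cycle or diamond-$6$-cycle (Lemmas~\ref{lem:3on4cycle}--\ref{lem:cleanC6}); the constant $1/324$ appears simply because the independent low-degree set has size at least $n/18$ and each of the at most $cn$ separating $4$-cycles forces the deletion of at most $18$ of its vertices, so one needs $n/18-18cn=\Omega(n)$. The exponential count then comes from Lemma~\ref{lem:main} (Liu--Yu \cite{Liu2020}, extended to the projective plane via \cite{Thomas1994}): for \emph{every} choice $F$ of one edge incident with each vertex of $S$, the graph $G-F$ remains $4$-connected and hence hamiltonian, and since there are exponentially many choices of $F$ while each fixed hamiltonian cycle of $G$ is compatible with only an exponentially smaller fraction of them, $G$ has $2^{\Omega(|S|)}$ distinct hamiltonian cycles. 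In short, the paper gets many cycles by deleting many different edge sets while preserving $4$-connectivity, not by rerouting one cycle; your proposal would need an entirely new ``hamiltonian cycle through linearly many prescribed gadgets'' theorem to go through.
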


\section{Results}

In this section we first prepare some lemmas which will be used to construct a vertex subset with several properties (see also Lemma~\ref{lem:main}). The proof of Theorem~\ref{thm:main} will be given at the end of this section.

For notation and terminology not explicitly defined in this paper, we refer the reader to~\cite{Bondy2008, Mohar2001}. 
{A vertex subset or a subgraph of a connected graph is \emph{separating} if its removal disconnects the graph. We call a separating vertex set on $k$ vertices a \emph{$k$-separator}. A {\it $k$-cycle} is a cycle of length $k$.} 
Let $S$ be an independent set. We say $S$ \emph{saturates} a 4- or 5-cycle $C$ if $S$ contains two vertices of $C$. A \emph{diamond-$6$-cycle} is the graph depicted in Figure~\ref{fig:diamond6}, where the white vertices are called \emph{crucial}. We say $S$ \emph{saturates} a diamond-6-cycle $D$ if $S$ contains three crucial vertices of $D$. {Recall that the \emph{Euler genus} $eg(\Sigma)$ of a surface $\Sigma$ is defined to be $2 - \chi(\Sigma)$, where $\chi(\Sigma)$ denotes the Euler characteristic of $\Sigma$. A graph $H$ is {\it $d$-degenerate} if every induced subgraph of $H$ has a vertex of degree at most $d$. It is well known that every $d$-degenerate graph $H$ is $(d+1)$-colorable and hence has an independent set of at least $|V(H)|/(d + 1)$ vertices.}

\begin{figure}[h!t]
	\centering
	\includegraphics[height=2.8cm]{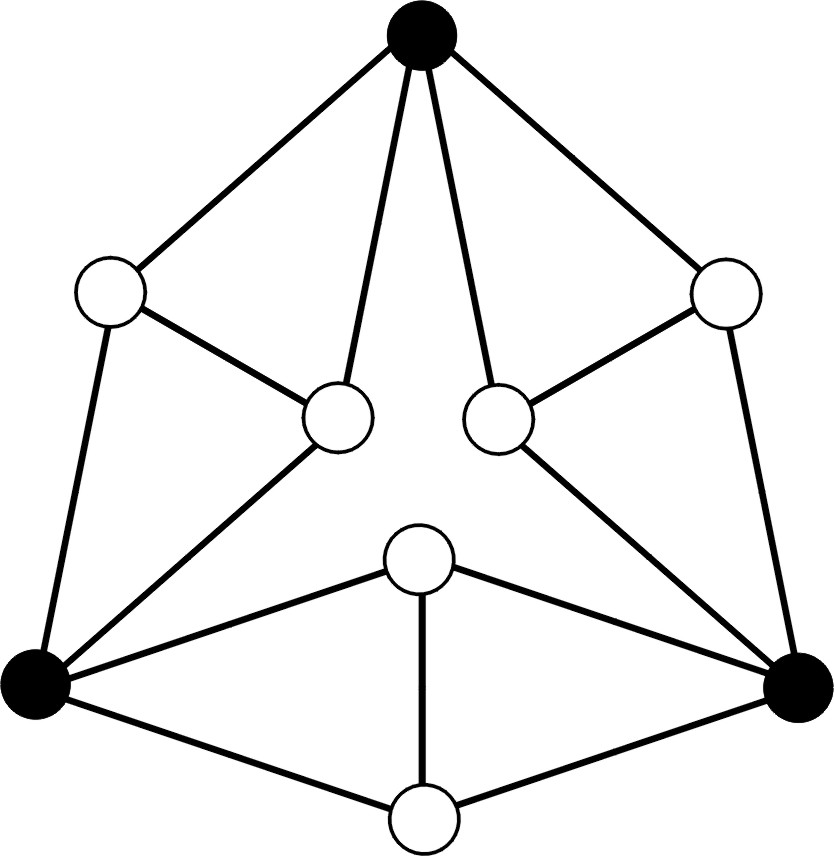}
	\caption{A diamond-6-cycle with six crucial vertices (white).}
	\label{fig:diamond6}
\end{figure}

The following tool is due to Alahmadi et al.~\cite{Alahmadi2020}, which helps finding homotopic curves from a sufficiently large family of curves.

\begin{lemma}[{\cite[Corollary~2]{Alahmadi2020}}] \label{lem:homotopic}
	Let $\Sigma$ be a surface of Euler genus $\sigma$ and let $\mathcal{C}$ be a family of simple closed curves on $\Sigma$ with the property that every $C \in \mathcal{C}$ can have at most one point that is contained in other curves in $\mathcal{C}$. Let $r$ be a positive integer. If $|\mathcal{C}| \ge 5(r - 1)\sigma + 1$, then there are $r$ homotopic curves in $\mathcal{C}$.
\end{lemma}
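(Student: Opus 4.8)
My plan is to split the proof into an easy counting step and a topological core, the latter being a linear bound on the number of free homotopy classes that a family of the described type can realise. For the counting step: all null-homotopic simple closed curves on $\Sigma$ are mutually freely homotopic, so they form a single class, and it suffices to bound the number $t$ of free homotopy classes that contain at least one member of $\mathcal{C}$. Indeed, if every such class contained at most $r-1$ members of $\mathcal{C}$, then $|\mathcal{C}|\le (r-1)t$; so once we know $t\le 5\sigma$ we get $|\mathcal{C}|\le 5(r-1)\sigma<5(r-1)\sigma+1\le|\mathcal{C}|$, a contradiction, whence some class contains $r$ curves, which are pairwise homotopic. Everything therefore reduces to showing $t\le 5\sigma$.

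For the topological core I would pick one curve from each of the $t$ classes and, after a small perturbation, assume the resulting subfamily $\mathcal{C}'$ is in general position; it still satisfies the hypothesis, its curves are pairwise non-homotopic, and at most one of them is null-homotopic. The hypothesis makes the self-overlap of $\bigcup\mathcal{C}'$ very tame: the points lying on two or more of the curves split into pairwise disjoint ``stars'', each curve meets at most one star, and curves through the same star are otherwise pairwise disjoint. Contracting each star to a vertex (and placing one auxiliary vertex on each curve meeting no star) turns $\Gamma:=\bigcup\mathcal{C}'$ into a graph embedded in $\Sigma$ in which every curve of $\mathcal{C}'$ is a single loop, so $|V(\Gamma)|\le t$ and $|E(\Gamma)|=t$. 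Cutting $\Sigma$ along $\Gamma$ and running an Euler-characteristic count on the complementary regions should then do it: since no curve of $\mathcal{C}'$ is null-homotopic, no region is a disc bounded by one curve; since no two curves are homotopic, no region is an annulus joining two different curves; hence, just as in the classical estimate behind pants decompositions, all but $O(\sigma)$ exceptional regions (discs, annuli, or M\"obius bands coming from one-sided curves or from bounded-genus summands, all kept under control by the one-point hypothesis) have Euler characteristic at most $-1$. Plugging this, together with the handshake identities for the boundary circles, into $|V(\Gamma)|-|E(\Gamma)|+F=\chi(\Sigma)=2-\sigma$ bounds $t$ linearly in $2-\chi(\Sigma)=\sigma$, and the deliberately generous constant $5$ means one need not optimise the bookkeeping.

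Combining $t\le 5\sigma$ with the pigeonhole of the first paragraph finishes the proof. The substantive part is the inequality $t\le 5\sigma$: the hypothesis that every curve shares at most one point with the union of the others is exactly what rules out the super-linear behaviour available to families with a merely bounded number of pairwise crossings, and I expect the main obstacle to a clean writeup to be the case analysis and counting of the small complementary regions — especially the treatment of one-sided curves on a non-orientable $\Sigma$ and of the star vertices.
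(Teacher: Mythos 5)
The paper itself gives no proof of this lemma (it is imported verbatim from Alahmadi, Aldred and Thomassen), so the comparison is with the argument in that source. Your first paragraph is fine and is exactly how the corollary is deduced there: bound the number $t$ of free homotopy classes met by $\mathcal{C}$ linearly in $\sigma$, then pigeonhole. (Two caveats: for $\sigma=0$ the inequality $t\le 5\sigma$ is impossible for a nonempty family, and the statement is genuinely degenerate on the sphere, where one simply needs $|\mathcal{C}|\ge r$ and all curves are homotopic; also no perturbation to general position is needed or wanted, since the hypothesis already forces the union to be a disjoint collection of isolated circles and bouquets at the ``star'' points, and a subfamily inherits the hypothesis automatically.)

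The genuine gap is in the topological core, which is the entire content of the lemma and which you do not actually prove. Your stated mechanism --- no region is a disc bounded by one curve, no region is an annulus between two distinct curves, hence all but $O(\sigma)$ regions have $\chi\le -1$ --- is the argument for \emph{pairwise disjoint} curves, and it fails precisely because of the star points the hypothesis allows. Once several curves pass through a common point, complementary regions can be discs whose boundary walk runs through two or more distinct essential curves: on the torus, the three pairwise non-homotopic curves of slopes $(1,0)$, $(0,1)$, $(1,1)$ through a single point have complement consisting of two open discs, so \emph{every} region has $\chi=+1$ and your dichotomy yields nothing. Moreover, asserting ``all but $O(\sigma)$ regions have $\chi\le-1$'' is circular: a priori, if $t$ were large the number of positive-$\chi$ disc regions could be of order $t$, and bounding it by $O(\sigma)$ is equivalent to what must be proved. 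The correct count is the one you only gesture at with ``handshake identities'': with $\Gamma$ the embedded graph you build ($|V|\le t$, $|E|=t$), use $\chi(\Sigma)=|V|-|E|+\sum_R\chi(R)$ over the components $R$ of the cut surface, note $\chi(R)\le 1$ with equality only for discs, and bound the number of disc regions via the fact that the total length of all boundary walks is $2t$ while, under the hypotheses, a disc region cannot have boundary walk of length $1$ (its curve would be null-homotopic) and boundary walks of length $2$ are confined to a bounded-by-$\sigma$ list of exceptions (two distinct curves traversed once each would be freely homotopic; a single curve traversed twice corresponds to a one-sided, crosscap-type curve, of which there are at most $\sigma$). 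This length-$\ge 3$ counting is what produces a bound of the form $t\le 3\sigma+O(1)$ and explains the slack in the constant $5$; it is exactly the case analysis you defer, and the claim you substitute for it is false in the presence of stars, so as written the proof has a hole at its decisive step.
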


The number of the vertices adjacent to at least three vertices of a 4-cycle on a surface can be bounded as follows.

\begin{lemma} \label{lem:3on4cycle}
	Let $G$ be a triangulation of a surface of Euler genus $\sigma$, and $C$ be a $4$-cycle in $G$. Denote $V_C := \{v \in V(G) : |N_G(v) \cap V(C)| \ge 3\}$. Then $|V_C| \le 8(\sigma + 1)$.
\end{lemma}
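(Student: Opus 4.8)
The plan is to reduce the statement to a bound on the number of common neighbours of three fixed vertices in a graph embeddable in $\Sigma$, and then quote the standard consequence of Euler's formula. Write $C = v_1 v_2 v_3 v_4 v_1$. First I would observe that if $v \in V_C$, then $v$ is adjacent to all three vertices of some $3$-element subset $T$ of $V(C)$, and $v \notin T$ since $G$ is loopless. Hence, for a triple $T \in \binom{V(C)}{3}$ put $N^\ast(T) := \{ u \in V(G) \setminus T : T \subseteq N_G(u)\}$; then $V_C \subseteq \bigcup_T N^\ast(T)$, the union ranging over the four triples $T \subseteq V(C)$. Consequently $|V_C| \le \sum_T |N^\ast(T)|$, and it suffices to prove $|N^\ast(T)| \le 2(\sigma + 1)$ for every such $T$.

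To bound $|N^\ast(T)|$, fix a triple $T$ and set $t := |N^\ast(T)|$. The vertex set $T \cup N^\ast(T)$ consists of $t+3$ distinct vertices, and retaining only the edges joining $T$ to $N^\ast(T)$ exhibits a copy of $K_{3,t}$ as a subgraph of $G$; in particular this $K_{3,t}$ embeds in $\Sigma$. If $t \le 1$ the required inequality is immediate because $\sigma \ge 0$, so assume $t \ge 2$: then $K_{3,t}$ is simple, connected, bipartite and triangle-free (it has girth $4$). Applying the classical bound that a connected simple triangle-free graph with $V \ge 3$ vertices and $E$ edges embedded in a surface of Euler genus $\sigma$ satisfies $E \le 2(V - 2 + \sigma)$, with $V = t+3$ and $E = 3t$, gives $3t \le 2(t + 1 + \sigma)$, that is $t \le 2(\sigma + 1)$.

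Combining the two steps yields $|V_C| \le 4 \cdot 2(\sigma + 1) = 8(\sigma + 1)$, which is the claim. The only step that needs care is the invocation of the generalized Euler bound: one must apply it on the connected closed surface $\Sigma$ with the correct value of $eg(\Sigma) = \sigma$, and one should note that the complete bipartite subgraph extracted above is genuinely simple, being built purely from the adjacency relation of $G$ (indeed the argument uses nothing about $G$ beyond simplicity and embeddability, so the triangulation hypothesis is not needed for this lemma, and Lemma~\ref{lem:homotopic} is not needed either). I expect no serious obstacle; the main thing is to set up the covering $V_C \subseteq \bigcup_T N^\ast(T)$ cleanly so that the four applications of the $K_{3,t}$ estimate give exactly the factor $8$.
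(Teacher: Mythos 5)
Your argument is correct, and its skeleton is the same as the paper's: cover $V_C$ by the four sets of common neighbours of the triples in $\binom{V(C)}{3}$, and bound each such set by observing that too many common neighbours would force a copy of $K_{3,t}$ embedded in $\Sigma$. The only real difference is the final nonembeddability step: the paper argues by contradiction, extracting $K_{3,q}$ with $q = 2(\sigma+1)+1$ and quoting Ringel's exact formula $eg(K_{3,q}) = \lceil (q-2)/2\rceil = \sigma+1 > \sigma$, whereas you derive the bound $t \le 2(\sigma+1)$ directly from the Euler-formula edge count $E \le 2(V-2+\sigma)$ for triangle-free graphs (handling $t \le 1$ and the tree case $K_{3,1}$ separately, as you should). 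Since only the lower-bound half of Ringel's theorem is ever used, your version is self-contained and marginally more elementary, at the cost of having to state the girth-4 Euler bound carefully (connectivity, $V \ge 3$, and the reduction to a cellular embedding); the paper's citation of Ringel avoids these bookkeeping points. Your side remarks are also accurate: the triangulation hypothesis plays no role in this lemma (the paper's proof does not use it either), and Lemma~\ref{lem:homotopic} is not needed here.
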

\begin{proof}
	Suppose to the contrary that $|V_C| > 8(\sigma + 1)$. We have that $G$ contains $K_{3, q}$ as a subgraph, where $q = 2(\sigma + 1) + 1$. This is however impossible since, by a theorem of Ringel~\cite{Ringel1965, Ringel1965a}, the Euler genus of $K_{3,q}$ equals $eg(K_{3, q}) = \lceil \frac{q - 2}{2} \rceil = \sigma + 1>\sigma$.
\end{proof}

The next three lemmas aim at finding a vertex set that saturates no 4-cycle, or 5-cycle, or diamond-6-cycle. {The main idea of the proofs comes from~\cite{Alahmadi2020}.}

\begin{lemma} \label{lem:cleanC4}
	Let $G$ be a triangulation of a surface of Euler genus $\sigma$ and let $S \subseteq V(G)$ be an independent set of vertices of degree at most $6$. If  $S$ saturates no separating $4$-cycle in $G$, then $S$ has a subset of size at least $|S|/c$ that saturates no $4$-cycle, where $c := \binom{6}{2} (10\sigma + 1) + 1$.
\end{lemma}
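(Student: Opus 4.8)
The plan is to reduce the statement to a purely local count and then to carry that count out with the homotopy tool of Lemma~\ref{lem:homotopic}. First form the auxiliary graph $H$ on vertex set $S$ in which two vertices are joined whenever some $4$-cycle of $G$ contains both of them. An independent set of $H$ is exactly a subset of $S$ that saturates no $4$-cycle, so, by the remark on degenerate graphs recalled above, it suffices to prove that $H$ is $(c-1)$-degenerate; in fact I will show the stronger statement $\Delta(H)\le c-1$. Fix $u\in S$. Since $S$ is independent, any $4$-cycle through $u$ and a second vertex $v\in S$ has $u$ and $v$ in opposite positions, hence is of the form $u\,w_i\,v\,w_j$ with $w_i,w_j\in N_G(u)$ common neighbours of $u$ and $v$. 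Therefore
\[
\deg_H(u)\ \le\ \sum_{\{w_i,w_j\}\subseteq N_G(u)}\bigl|\{v\in S\setminus\{u\}:\ v\sim_G w_i \text{ and } v\sim_G w_j\}\bigr|,
\]
and since $\deg_G(u)\le 6$ there are at most $\binom 6 2$ summands. So everything comes down to showing: for every $u\in S$ and every pair $w_i,w_j\in N_G(u)$, at most $10\sigma+1$ vertices of $S$ are common neighbours of $w_i$ and $w_j$.

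Fix such a triple and let $v_1,\dots,v_t\in S$ be common neighbours of $w_i$ and $w_j$. For every $k$ the $4$-cycle $u\,w_i\,v_k\,w_j$ meets $S$ in $\{u,v_k\}$, and for every $k\neq\ell$ the $4$-cycle $v_k\,w_i\,v_\ell\,w_j$ meets $S$ in $\{v_k,v_\ell\}$; by hypothesis none of these $4$-cycles is separating. I will treat the case $w_i\not\sim_G w_j$, the case $w_i\sim_G w_j$ being handled by a similar but simpler argument (the two faces at the edge $w_iw_j$ already absorb all ``good'' common neighbours, so the count is even smaller there). When $w_i\not\sim_G w_j$ all the $4$-cycles above are \emph{chordless}, as $uv_k$ and $v_kv_\ell$ are non-edges ($S$ being independent). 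Here I use the following elementary fact, proved by a short Euler-formula argument: in a simple triangulation of a surface, every chordless contractible $4$-cycle is separating. (One side of it is a disc, which must contain an interior vertex or else a chord would be forced; and if the other side contained no interior vertex, counting the edges of the triangulation of that side against its Euler characteristic would force more than $\binom 4 2$ edges among four vertices, except on the sphere, where it would again force a chord.) Applying this to each chordless non-separating $4$-cycle above, every one of them is \emph{non-contractible}; equivalently, the $2$-paths $P_0:=w_i\,u\,w_j$ and $P_k:=w_i\,v_k\,w_j$ are pairwise non-homotopic rel.\ the endpoints $\{w_i,w_j\}$, and each $P_0\cup P_k$ is non-contractible.

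Now contract the $2$-path $P_0$ to a single point $p$; this is a homeomorphism of the surface, under which the $2$-paths $P_k$ become simple closed curves $\ell_1,\dots,\ell_t$ through $p$ that pairwise meet only in $p$ (the $P_k$ pairwise share only $w_i,w_j$ and avoid the interior of $P_0$), each of them non-contractible. Apply Lemma~\ref{lem:homotopic} with $r=3$: if $t\ge 10\sigma+1=5(3-1)\sigma+1$ it produces three of these curves, say $\ell_a,\ell_b,\ell_c$, that are homotopic. The remaining, and main, step is to derive a contradiction from this. The claim is that three pairwise-homotopic, individually non-contractible simple closed curves through a common point $p$, pairwise meeting only at $p$, cannot all arise from $2$-paths as above, because one of the associated $4$-cycles — some $u\,w_i\,v_x\,w_j$ or, more to the point, some $v_x\,w_i\,v_y\,w_j$ — must then be separating, contradicting the hypothesis. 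Heuristically, among three such homotopic curves one lies ``between'' the other two, and two ``parallel'' ones $\ell_x,\ell_y$ force their ``difference curve'' — which un-contracts precisely to the chordless $4$-cycle $v_x\,w_i\,v_y\,w_j$ — to be contractible, hence separating by the elementary fact above; equivalently, $P_x$ and $P_y$ become homotopic rel.\ their endpoints, which we excluded. On orientable surfaces a homological intersection-number computation already forbids two homotopic curves of this kind, and it is exactly the one-sided curves that occur on the projective plane (and on non-orientable pieces in general) that make it necessary to rule out triples rather than pairs, which is why $r=3$ appears. Formalising this betweenness/difference-curve analysis — the bookkeeping of which side of which $4$-cycle contains which $v_x$, together with the separate treatment of non-orientability — is the hard part of the proof, and this is where I would follow the arguments of Alahmadi et al.\ most closely. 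Granting it, Lemma~\ref{lem:homotopic} gives $t\le 5\cdot 2\cdot\sigma=10\sigma\le 10\sigma+1$, which is the bound we needed; hence $\Delta(H)\le \binom 6 2(10\sigma+1)=c-1$, and the desired subset of $S$ is a largest colour class of a proper $c$-colouring of $H$.
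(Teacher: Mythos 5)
Your skeleton is the paper's: the same auxiliary graph $H$, the same pigeonhole over the at most $\binom{6}{2}$ pairs of neighbours of a vertex of $S$ of large $H$-degree, contraction of the $2$-path, and Lemma~\ref{lem:homotopic} with $r=3$; the intended contradiction is likewise a separating $4$-cycle saturated by $S$. But two places in your write-up are genuine gaps rather than mere compression. First, the case $w_i\sim w_j$ is not ``simpler'' and is not handled: your whole mechanism (non-separating and chordless $\Rightarrow$ non-contractible, and contractible $\Rightarrow$ separating for the ``difference'' cycle $v_xw_iv_yw_j$) needs chordlessness, which fails exactly there, and the claim that ``the two faces at the edge $w_iw_j$ absorb all good common neighbours'' is false on surfaces of positive Euler genus, where an edge can lie in many non-facial triangles and the corresponding $4$-cycles $uw_iv_kw_j$ need not be separating. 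The paper's argument needs no such case split because it never uses chordlessness or contractibility: it takes all $4$-cycles through the path $uvw$ saturated by $S$, contracts $uvw$, extracts three homotopic curves, and concludes (via the machinery of Alahmadi et al.) that the middle one of the three separates, already in the graph, the fourth vertices of the outer two; since that middle $4$-cycle contains $v$ and its own fourth vertex, both in $S$, it is a separating $4$-cycle saturated by $S$, contradicting the hypothesis.

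Second, the step you explicitly defer (``granting it'') is the crux, and your sketch of it points in a direction that does not quite work. You want to deduce from the homotopy of two of the three loops that $P_x$ and $P_y$ are homotopic rel endpoints, i.e.\ that the difference $4$-cycle is contractible; but free homotopy of the loops does not yield based homotopy --- this is precisely the failure you yourself cite as the reason $r=3$ is needed --- and your betweenness heuristic for three curves is left unformalised (and its conclusion, contractibility of a chordless difference cycle, would in any case not survive the adjacent case above). To be fair, the paper also compresses this step by deferring to Alahmadi et al.; the difference is that the statement it invokes --- among three homotopic curves pairwise meeting in at most one point, the middle $4$-cycle itself separates the other two --- is the one that machinery actually delivers, and it makes your contractibility detour, your ``elementary fact'' about chordless contractible $4$-cycles, and the $w_i\sim w_j$ case distinction all unnecessary. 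With that substitution your counting (per pair at most $10\sigma+1$ vertices, hence $\Delta(H)\le\binom{6}{2}(10\sigma+1)=c-1$ and a colour class of size at least $|S|/c$) matches the paper's proof.
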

\begin{proof}
	Let $H$ be the graph on the vertex set $S$ such that two vertices are adjacent if they saturate some 4-cycle in $G$. It suffices to show that $H$ has maximum degree at most $d := c - 1 = \binom62 (10\sigma + 1)$ and hence chromatic number at most $c$. Suppose, to the contrary, there exists $v \in S = V(H)$ with $d_H(v) > d$. {As $d_G(v)\leq 6$, there are $u, w \in N_G(v)$ such that the path $uvw$ are contained in at least $\lceil (d+1)/\binom{6}{2} \rceil = (10\sigma + 1) + 1$ 4-cycles in $G$.} We may contract the path $u v w$ and apply Lemma~\ref{lem:homotopic} to show that there are at least three homotopic 4-cycles containing the path $u v w$ and saturated by $S$. This yields a separating 4-cycle saturated by $S$ and hence contradicts our assumption. Thus the lemma follows.
\end{proof}
\begin{lemma} \label{lem:cleanC5}
	Let $G$ be a triangulation of a surface of Euler genus $\sigma$ and let $S \subseteq V(G)$ be an independent set of vertices of degree at most $6$. If $S$ saturates no $4$-cycle in $G$, then $S$ has a subset of size at least $|S|/c$ that saturates no $5$-cycle, where $c := 2 \binom{6}{2} (40\sigma + 1) + 1$.
\end{lemma}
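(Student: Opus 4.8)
The plan is to follow the proof of Lemma~\ref{lem:cleanC4}. Let $H$ be the graph on the vertex set $S$ in which two vertices are adjacent exactly when they saturate some common $5$-cycle of $G$; a subset of $S$ is independent in $H$ if and only if it saturates no $5$-cycle. It therefore suffices to show that $\Delta(H)\le d := c-1 = 2\binom{6}{2}(40\sigma+1)$, for then $\chi(H)\le c$ and a largest colour class of $H$ is a subset of $S$ of size at least $|S|/c$ saturating no $5$-cycle.

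Suppose for contradiction that some $v\in S$ has $d_H(v) > d$, and for each neighbour $s$ of $v$ in $H$ fix a $5$-cycle of $G$ through $v$ and $s$. As $S$ is independent, $v$ and $s$ are non-adjacent, and hence lie at distance exactly $2$ on that $5$-cycle; so it has the form $v\,a\,s\,b\,c\,v$ with $a,c\in N_G(v)$ distinct and the remaining vertex $b$ distinct from $v,a,c,s$ (and $b\notin S$, since $b\sim s$). There are at most $6\cdot 5 = 2\binom{6}{2}$ ordered pairs $(a,c)$ of distinct neighbours of $v$, so by the pigeonhole principle some pair $(u,w)$ arises for at least $\lceil (d+1)/(2\binom{6}{2})\rceil \ge 40\sigma+2$ pairwise distinct vertices $s_1,s_2,\dots\in S$; that is, for each $i$ the vertex $v$ lies on a $5$-cycle $\gamma_i = v\,u\,s_i\,b_i\,w\,v$ of $G$ containing the path $u\,v\,w$.

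Now one uses the hypothesis to simplify the configuration. If $b_i = b_j$ for some $i\ne j$, then $u\,s_i\,b_i\,s_j\,u$ is a $4$-cycle on four distinct vertices saturated by $\{s_i,s_j\}\subseteq S$ --- contrary to hypothesis; likewise $b_i\ne s_j$ for $i\ne j$ (otherwise $s_i\sim s_j$) and $b_i\notin\{u,v,w\}$. Hence the $s_i$ are distinct, the $b_i$ are distinct, and the sets $\{s_i,b_i\}$ are pairwise disjoint, so $\gamma_i\cap\gamma_j$ is exactly the path $u\,v\,w$ for $i\ne j$. Contracting that path to a single vertex $p$ --- which does not alter the surface up to homeomorphism --- turns each $\gamma_i$ into a simple closed curve $\delta_i$ (passing through $p$, $s_i$, $b_i$) with $\delta_i\cap\delta_j = \{p\}$ for $i\ne j$. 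Since $40\sigma+2 \ge 5(r-1)\sigma+1$ for $r=9$ (and, trivially, for $r=2$ when $\sigma=0$), Lemma~\ref{lem:homotopic} yields $r\ge 2$ --- and $r=9$ in positive Euler genus --- pairwise homotopic curves among the $\delta_i$; undoing the contraction, the corresponding $\gamma_i$ are pairwise freely homotopic $5$-cycles of $G$, each still containing the path $u\,v\,w$.

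The remaining --- and, I expect, most delicate --- step is to extract from these homotopic $5$-cycles a $4$-cycle of $G$ saturated by $S$, which contradicts the hypothesis. For two of them whose $u$--$w$ arcs $\beta_i = u\,s_i\,b_i\,w$ and $\beta_j = u\,s_j\,b_j\,w$ are consecutive in the natural nesting of the family $\{\beta_k\}$, the (null-homotopic) closed curve $\beta_i\cup\beta_j$ bounds a disk $D$, whose boundary is the $6$-cycle $u\,s_i\,b_i\,w\,b_j\,s_j\,u$, triangulated by $G$, with no other $\beta_k$ inside. Since $s_i\not\sim s_j$, the link of $u$ enters the interior of $D$; if it enters in a single vertex $x$, then $u\,s_i\,x\,s_j$ is a $4$-cycle saturated by $S$ and we are done. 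In general one must descend into the fan at $u$, using the degree bounds at $s_i$ and at $v$ together with the independence of $S$ to force such a $4$-cycle; the case in which the $\gamma_i$ are non-contractible (which is precisely why one keeps as many as nine parallel curves when $\sigma\ge 1$) is the main obstacle. Once a $4$-cycle saturated by $S$ is produced we have the desired contradiction, so $\Delta(H)\le d$ and the lemma follows.
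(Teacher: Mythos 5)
Your reduction to the claim $\Delta(H)\le d$ is the problem: unlike the $4$-cycle case, that claim is simply false, and the final step you flag as ``delicate'' (extracting a $4$-cycle saturated by $S$ from two consecutive homotopic $5$-cycles through $u\,v\,w$) cannot be carried out in general. The region between two consecutive arcs $u\,s_i\,b_i\,w$ and $u\,s_j\,b_j\,w$ is a hexagonal disc, and nothing forces its triangulation to give $s_i$ and $s_j$ a second common neighbour: for instance, triangulate it with two internal vertices $p,q\notin S$, where $p$ is adjacent to $u,s_i,b_i,q$ and $q$ is adjacent to $u,p,b_i,w,b_j,s_j$. Then the only common neighbour of $s_i$ and $s_j$ is $u$, so no $4$-cycle saturated by $S$ arises (since $S$ is independent, a saturated $4$-cycle needs two $S$-vertices with two common neighbours). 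Stacking many such parallel arcs between $u$ and $w$, together with the path $u\,v\,w$, and capping off, yields a planar triangulation and an independent set $S=\{v,s_1,s_2,\dots\}$ of vertices of degree at most $6$ that saturates no $4$-cycle, yet $v$ lies on a saturated $5$-cycle $v\,u\,s_i\,b_i\,w\,v$ for every $i$; the vertices $u,w,p,q$ may have arbitrarily large degree since they are not in $S$, so your appeal to ``degree bounds'' gives no purchase. Hence $\Delta(H)$ is unbounded and no contradiction can be reached from a single high-degree vertex of $H$.

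The paper proves the weaker, but sufficient, statement that $H$ is $d$-degenerate (which still gives $\chi(H)\le d+1=c$), and this requires a genuinely global argument rather than a local one. Assuming some induced subgraph $K$ of $H$ has all degrees exceeding $d$, for each $v\in V(K)$ one builds, exactly as you did, $t=40\sigma+1$ internally disjoint arcs and extracts \emph{nine} homotopic curves, bounding a closed disc $D_v$; one then chooses $v$ so that $D_v$ contains the fewest vertices of $G$. The point of nine curves is that the $S$-vertex $x_5$ on the middle curve is insulated from the boundary of $D_v$: a short case analysis (using that $S$ saturates no $4$-cycle) shows every $5$-cycle through $x_5$ and another vertex of $V(K)\setminus\{v\}$ lies inside $D_v$. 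Since $d_{K-v}(x_5)\ge d$, repeating the construction for $x_5$ in $K-v$ produces a disc $D_{x_5}\subseteq D_v$ avoiding $x_1$, hence with strictly fewer vertices, contradicting the minimal choice of $v$. Your opening reductions (pigeonhole over the pairs of neighbours of $v$, distinctness of the $b_i$, contraction of $u\,v\,w$ and the use of Lemma~\ref{lem:homotopic}) match the paper, but without replacing the max-degree claim by this degeneracy-plus-minimal-disc descent the proof does not go through.
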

\begin{proof} 
	Let $H$ be the graph on the vertex set $S$ in which two vertices are adjacent if they saturate some 5-cycle in $G$. Let $d := c - 1 = 2 \binom62 (40\sigma + 1)$.  It suffices to show that $H$ is $d$-degenerate. Let $K$ be any induced subgraph of $H$. We will show that $K$ has a vertex of degree at most $d$.

	We first consider the following. Let $v$ be a vertex in $K$ with $d_K(v) \ge d$. Since $d_G(v) \le 6$, there exist $u, w \in N_G(v)$, distinct vertices $x_1, \dots, x_{2t} \in N_K(v)$ and $y_1, \dots, y_{2t} \in V(G) \setminus S$, where {$t := 40\sigma + 1$}, such that for every $1 \le i \le 2t$ either $u v w x_i y_i u$ or $u v w y_i x_i u$ is a 5-cycle saturated by $V(K)$. Denote by $C_i$ the 5-cycle saturated by $v$ and $x_i$,  and denote by $P_i$ the path obtained from $C_i$ by deleting $v$ ($1 \le i \le 2t$). We claim that at least $t$ vertices of $y_1, \dots, y_{2t}$ are distinct. Otherwise there are $1 \le i < j < k \le 2t$ such that $y_i = y_j = y_k$. Then $u$ or $w$ is adjacent to two of $x_i,x_j,x_k$, say $x_i, x_j \in N_G(w)$. However, this implies that $w x_i y_i x_j w$ is a 4-cycle saturated by $V(K)$, which contradicts our assumption. Therefore, we may assume that the paths $P_1, \dots, P_t$ are pairwise internally disjoint.
	{By contracting the path $uvw$ and applying Lemma~\ref{lem:homotopic}, we may obtain nine homotopic curves from $P_1, \dots, P_t$, say $P_1, \dots, P_9$. Relabelling if necessary, we may further assume that the closed disc $D_v$ bounded by $P_1 \cup P_9$ contains $P_1, \dots, P_9$, and the closed disc bounded by $P_1 \cup P_5$ contains $P_1, \dots, P_5$ but not $P_6, \dots, P_9$.}

	{We now show that $K$ has minimum degree at most $d$. Suppose to the contrary that $d_K(v) > d$ for every $v \in V(K)$. We choose a vertex $v \in V(K)$ and the associated nine homotopic curves such that the number of vertices of $G$ contained in $D_v$ is minimum.
	
	Let $C$ be a 5-cycle in $G$ containing $x_5$ and another vertex $v' \in V(K)$ outside of $D_v$. We claim that $v' = v$. Let $P$ be the minimal path in $C$ containing $x_5$ with end-vertices in $P_1 \cup P_9$. Notice that $C$ has length five and $x_5$ lies in the interior but not the boundary of the disc $D_v$.} So by the arrangement of $P_1, \dots, P_9$, the end-vertices of $P$ must be $u$ and $w$. Since $V(K)$ is an independent set of $G$ and $v, x_5 \in V(K)$, we have $v' \notin N_G(x_5) \cup \{u, w\}$. Therefore $P$ has length less than four. If $P$ has length two, then $u v w x_5 u$ would be a 4-cycle saturated by $V(K) \subseteq S$, which contradicts our assumption. If $P$ has length three, then we have $v' = v$ (otherwise $u v w v' u$ would be a 4-cycle saturated by $V(K)$). This thus establishes our claim.

	 Our previous claim implies that all 5-cycles in $G$ containing $x_5$ and another vertex of $V(K)$ other than $v$ must lie in $D_v$. 
	 As $d_{K - v}(x_5) = d_K(x_5) - 1 \ge d$, we may apply our discussion above to $K - v$ and $x_5$ (instead of $K$ and $v$) to obtain nine homotopic curves associated with $x_5$ and a closed disc containing them. Then we may have $D_{x_5} \subseteq D_v$.   Moreover, it is not hard to see that $D_{x_5}$ does not contain $x_1$. Therefore the number of vertices in $D_{x_5}$ is strictly less than that of $D_v$. This contradicts  our choice of $v$ and $D_v$ and hence the result follows.
\end{proof}
	
The proof of following lemma is omitted as it can be readily deduced from the proof of~\cite[Lemma~10]{Alahmadi2020}.

\begin{lemma}[{\cite[Lemma~10]{Alahmadi2020}}] \label{lem:cleanC6}
	Let $G$ be a triangulation of a surface of Euler genus $\sigma$ and let $S \subseteq V(G)$ be an independent set of vertices of degree at most $6$. If $S$ saturates no $4$-cycle in $G$, then $S$ has a subset of size at least $|S|/c$ that saturates no diamond-$6$-cycle, where $c$ is a positive constant depending only on $\sigma$.
\end{lemma}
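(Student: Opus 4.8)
The plan is to transcribe the proof of Lemma~\ref{lem:cleanC5} almost verbatim, with diamond-$6$-cycles playing the role of $5$-cycles; this is essentially how the statement is obtained in~\cite[Lemma~10]{Alahmadi2020}, so I only record the adaptations and flag the step that needs care. As there, I would first introduce an auxiliary graph $H$ on the vertex set $S$, this time joining $u$ and $v$ whenever some diamond-$6$-cycle of $G$ has $u$, $v$ and a third vertex of $S$ among its crucial vertices. Then any diamond-$6$-cycle with at least three crucial vertices in $S$ induces a clique in $H$ (given two such crucial vertices, any third one is a witness), so every independent set of $H$ contains at most two crucial vertices of every diamond-$6$-cycle, i.e.\ saturates none of them. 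Hence it suffices to show that $H$ is $d$-degenerate for some $d = d(\sigma)$; then $H$ is $(d+1)$-colourable, it has an independent set of size at least $|S|/(d+1)$, and we take $c := d + 1$.

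To bound the degeneracy, fix an induced subgraph $K$ of $H$ and suppose for contradiction that $d_K(v) > d$ for every $v \in V(K)$. Fix $v$; each of its neighbours $x$ in $K$ is witnessed by a diamond-$6$-cycle through $v$, $x$ and some $z \in S$. Since $d_G(v) \le 6$ and a diamond-$6$-cycle has bounded size, only $O(1)$ combinatorial types occur for the bounded subgraph $J$ around $v$ that should be common to the family (playing the role of the stem $uvw$ in Lemma~\ref{lem:cleanC5}), so by pigeonhole $\Omega(d)$ of these diamond-$6$-cycles share one such $J$; choosing $d$ large in terms of $\sigma$, as many as we need survive the next step. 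Exactly as in Lemma~\ref{lem:cleanC5}, a counting argument using $d_G(v) \le 6$ together with the hypothesis that $S$ saturates no $4$-cycle lets us pass to a subfamily whose outer parts (the diamond-$6$-cycles with the interior of $J$ removed) are pairwise internally disjoint. Contracting $J$ to a point turns the corresponding curves into simple closed curves meeting one another only at that point, so Lemma~\ref{lem:homotopic} yields, for a constant $r$ fixed in advance, an $r$-element homotopic subfamily; taking $r$ large enough we arrange the associated discs in a nested chain, with innermost disc $D_v$.

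Finally I would run the minimality descent of Lemma~\ref{lem:cleanC5}. Choose $v \in V(K)$, over all valid configurations, so that $D_v$ contains as few vertices of $G$ as possible. A ``middle'' crucial vertex $x_j$ of the configuration lies strictly inside $D_v$, and, using again that $S$ saturates no $4$-cycle, every diamond-$6$-cycle witnessing an edge of $H$ at $x_j$ must lie inside $D_v$: a shortest subpath of such a cycle through $x_j$ with both ends on $\partial D_v$ is forced to run between the two attachment vertices of $J$, while a shorter one would create a $4$-cycle with three crucial vertices in $S$. Since $d_{K - v}(x_j) = d_K(x_j) - 1 \ge d$, applying the previous paragraph to $(K - v, x_j)$ produces a disc $D_{x_j} \subseteq D_v$ that omits at least one vertex of $D_v$, contradicting the choice of $v$. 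The genuinely delicate point --- and the reason the full computation is best left to~\cite[Lemma~10]{Alahmadi2020} --- is choosing the substructure $J$ correctly: it must be common to many of the diamond-$6$-cycles through a high-degree vertex, its removal must leave simple-curve outer parts so that Lemma~\ref{lem:homotopic} applies, and the no-$4$-cycle hypothesis must still force both the nesting and the confinement of the relevant diamond-$6$-cycles to $D_v$. Once $J$ is set up properly, the rest is a routine transcription of Lemma~\ref{lem:cleanC5}.
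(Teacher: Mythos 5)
Your approach matches the paper's: the paper gives no proof of Lemma~\ref{lem:cleanC6} at all, stating only that it ``can be readily deduced from the proof of''~\cite[Lemma~10]{Alahmadi2020}, and your proposal is exactly that deduction --- rerunning the auxiliary-graph/degeneracy and homotopic-curve descent of Lemma~\ref{lem:cleanC5} with diamond-$6$-cycles in place of $5$-cycles, deferring the detailed choice of the common substructure to the cited lemma. So the proposal is consistent with, and essentially identical in approach to, what the paper does.
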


One of the key ingredients of the proof of exponential lower bound on the number of hamiltonian cycles in 5-connected triangulations given by Alahmadi et al.~\cite{Alahmadi2020} is to find {many edge sets $F \subseteq E(G)$} so that $G - F$ is 4-connected. Their approach was refined by~\cite{Lo2020a, Liu2020} for 4-connected planar triangulations. In order to prove our result for triangulations of the projective plane, we need to generalize a lemma given by Liu and Yu~\cite{Liu2020} to surfaces of higher genus.

Let $G$ be a triangulation of any surface and $A \subseteq V(G)$ be a 3-separator of $G$. It is shown in the proof of~\cite[Lemma~1]{Alahmadi2020} and its subsequent discussion that $G[A]$ is a surface separating 3-cycle. Using this fact and a theorem of Thomas and Yu~\cite{Thomas1994} that every 4-connected projective planar graph is hamiltonian, the proof of~\cite[Lemma~2.1]{Liu2020} can be easily modified to show the following result. We omit the proof.

\begin{lemma}[{\cite[Lemma~2.1]{Liu2020}}] \label{lem:main}
	Let $G$ be a $4$-connected triangulation of a surface $\Sigma$ of Euler genus $\sigma$. Let $S \subseteq V(G)$ be a vertex subset satisfying the following conditions: \begin{enumerate}
		\item[(i)] $d_G(v) \le 6$ for any $v \in S$;
		\item[(ii)] $S$ is an independent vertex set;
		\item[(iii)] no vertex in $S$ is contained in any separating $4$-cycle in $G$;
		\item[(iv)] no vertex in $S$ is adjacent to three vertices of any separating $4$-cycle in $G$; and
		\item[(v)] $S$ saturates no $4$-, $5$- or diamond-$6$-cycle.
	\end{enumerate}
	Let $F \subseteq E(G)$ be any edge subset such that $|F| = |S|$ and for any $v \in S$ there is precisely one edge in $F$ incident with $v$. Then $G - F$ is $4$-connected. Moreover, if $\Sigma$ is the plane or the projective plane, then $G$ has $2^{\Omega(|S|)}$ hamiltonian cycles.
\end{lemma}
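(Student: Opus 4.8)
The plan is to adapt the proof of~\cite[Lemma~2.1]{Liu2020}, which handles the planar case, isolating the two places where the surface enters. We treat the $4$-connectivity statement first and the Hamiltonian count second.

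\emph{Step 1: $G-F$ is $4$-connected.} Suppose not, and fix a partition $V(G) = A \cup T \cup B$ with $A, B \neq \emptyset$, $|T| \le 3$, and no edge of $G-F$ joining $A$ and $B$. Since $G$ is $4$-connected, $G$ has an edge between $A$ and $B$, and every such edge lies in $F$. As $S$ is independent, $|F| = |S|$, and exactly one edge of $F$ meets each vertex of $S$, every edge of $F$ — hence every $G$-edge between $A$ and $B$ — has exactly one endpoint in $S$; let $K$ be the set of those endpoints, so $K \neq \emptyset$ and $T \cup K$ separates $G$. Using the degree bound~(i) and a minimality choice of the separation, one reduces to a bounded family of local configurations around the vertices of $K$, exploiting that $G$ is a triangulation. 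For instance, when one side of the separation is a single vertex $a$, the link of $a$ is a separating $4$-cycle that either contains a vertex of $S$ or has three vertices adjacent to a vertex of $S$, contradicting~(iii) or~(iv); the case $K = \{v\}$, where $T \cup \{v\}$ is a $4$-separator of $G$, is handled by passing to a minimal separator and invoking the triangulation structure together with~(iii) and~(iv); and the case $|K| \ge 2$, which places two vertices of $S$ on a common $4$-, $5$- or diamond-$6$-cycle, is excluded by~(v). The only ingredient special to the plane in~\cite{Liu2020} is that a minimal $3$-separator of a planar triangulation induces a disc-bounding $3$-cycle; on an arbitrary surface this is replaced by the fact, extracted from the proof of~\cite[Lemma~1]{Alahmadi2020}, that a $3$-separator of a triangulation of any surface induces a surface-separating $3$-cycle. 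With this substitution the case analysis of~\cite{Liu2020} goes through.

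\emph{Step 2: the Hamiltonian count.} Since $S$ is independent, all $d_G(v) \le 6$ edges at a vertex $v \in S$ run to $V(G) \setminus S$, and edges at distinct vertices of $S$ are distinct; hence any function assigning to each $v \in S$ an edge incident with $v$ produces a set $F$ meeting the hypothesis, and Step~1 makes $G-F$ $4$-connected. If $\Sigma$ is the plane, Tutte's theorem~\cite{Tutte1956} gives a Hamiltonian cycle $H_F$ of $G-F$; if $\Sigma$ is the projective plane, the theorem of Thomas and Yu~\cite{Thomas1994} does. Each $H_F$ is a Hamiltonian cycle of $G$ avoiding every edge of $F$. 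There are $\prod_{v \in S} d_G(v)$ admissible sets $F$, and a fixed Hamiltonian cycle $H$ of $G$ arises as $H_F$ only when $F \subseteq E(G) \setminus E(H)$; as $H$ uses exactly two edges at every $v \in S$, the number of such $F$ is $\prod_{v \in S}(d_G(v) - 2)$. Therefore $G$ has at least
\[
\prod_{v \in S} \frac{d_G(v)}{d_G(v) - 2} \;\ge\; \Bigl(\tfrac{3}{2}\Bigr)^{|S|}
\]
distinct Hamiltonian cycles, since $d/(d-2) \ge 3/2$ for $d \in \{4,5,6\}$, and this is $2^{\Omega(|S|)}$.

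\emph{Main obstacle.} The delicate part is the case analysis in Step~1: one must verify that every local configuration treated in~\cite{Liu2020}, in particular those built on saturated $4$-, $5$- and diamond-$6$-cycles, survives the passage from ``disc-bounding cycle'' to ``surface-separating cycle'', and that no genuinely new configuration appears on surfaces of positive genus. As this is a routine adaptation of~\cite{Liu2020}, we omit the details.
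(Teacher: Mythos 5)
Your proposal is correct and follows essentially the same route as the paper, which likewise defers to the case analysis of Liu and Yu's planar proof, substituting the fact from Alahmadi et al.\ that a $3$-separator of a triangulation of any surface induces a surface-separating $3$-cycle, and invoking Tutte's theorem and the Thomas--Yu theorem for hamiltonicity of $G-F$. Your explicit double-counting bound $\prod_{v\in S} d_G(v)/(d_G(v)-2) \ge (3/2)^{|S|}$ is the standard argument behind the $2^{\Omega(|S|)}$ count and is consistent with what the cited proofs do.
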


We are now ready to prove Theorem~\ref{thm:main}.

\begin{proof} [Proof of Theorem~\ref{thm:main}] 
	
	To prove the theorem, it suffices to construct a vertex set $S \subseteq V(G)$ of size $\Omega(n)$ satisfying conditions~(i) to~(v) in Lemma~\ref{lem:main}.
	
	{Since $G$ has average degree less than 6 and minimum degree at least 4, the set $S_1$ of vertices of degree at most 6 has size at least $n/3$. We may subsequently obtain a vertex set $S_2 \subseteq S_1$ of size at least $|S_1| / 6 \ge n/18$ satisfying~(i) and~(ii) of Lemma~\ref{lem:main}, as planar and projective planar graphs are 6-colorable.}

	Let $C$ be any separating 4-cycle in $G$. Since $S_2$ is an independent set, {it follows from} Lemma~\ref{lem:3on4cycle} that $S_2$ has at most $16+2=18$ vertices that are contained in $C$ or adjacent to three vertices of $C$. Deleting these vertices from $S_2$ for every separating 4-cycle, we may obtain a vertex set $S_3 \subseteq S_2$ satisfying~(i) to~(iv) of Lemma~\ref{lem:main} and $|S_3|\geq n/18-18cn=(1/18-324c/18)n$. Recall that $c$ is a constant less than $1/324$. This means that $1/18-324c/18$ is a positive constant and hence $|S_3|=\Omega(n)$.
	
	As no vertex in $S_3$ is contained in any separating 4-cycle, no 4-cycle saturated by $S_3$ is separating. Successively applying Lemmas~\ref{lem:cleanC4},~\ref{lem:cleanC5} and~\ref{lem:cleanC6}, we obtain a vertex set $S \subseteq S_3$ of size $\Omega(n)$ satisfying (i) to (v) of Lemma~\ref{lem:main}, implying that  $G$ has $2^{\Omega(n)}$ hamiltonian cycles. This completes our proof.
\end{proof}

We remark that Gr\"unbaum~\cite{Gruenbaum1970} and Nash-Williams~\cite{Nash-Williams1973}
independently conjectured that every 4-connected toroidal graph is hamiltonian. The truth of this conjecture (respectively, an analogue for the Klein bottle) would extend Theorem~\ref{thm:main} to triangulations of the torus (respectively, the Klein bottle). {Note that there are non-hamiltonian 4-connected graphs that are embedded in the double torus or in the surface obtained from the sphere by attaching three crosscaps (see~\cite{Kawarabayashi2016}).}

\bibliographystyle{abbrv}
\bibliography{paper}

\end{document}